\providecommand{\U}[1]{\protect\rule{.1in}{.1in}}
\newtheorem{theorem}{Theorem}
\newtheorem{proposition}[theorem]{Proposition}
\newenvironment{proof}[1][Proof]{\noindent\textbf{#1.} }{\ \rule{0.5em}{0.5em}}
\begin{document}

\title{\textbf{The Welfare of Ramsey Optimal Policy Facing Auto-Regressive
Shocks\bigskip}\\Post Print: Economics Bulletin (2020), 40(2), pp. 1797-1803.}
\author{Jean-Bernard Chatelain\thanks{Paris School of Economics, Universit\'{e} Paris
I Pantheon Sorbonne, PjSE, 48 Boulevard Jourdan, 75014 Paris. Email:
jean-bernard.chatelain@univ-paris1.fr} and Kirsten Ralf\thanks{ESCE
International Business School, INSEEC\ U. Research Center, 10 rue Sextius
Michel, 75015 Paris, Email: Kirsten.Ralf@esce.fr.}}
\date{June 24, 2020}
\maketitle

\begin{abstract}
With non-controllable auto-regressive shocks, the welfare of Ramsey optimal
policy is the solution of a single Riccati equation of a linear quadratic
regulator. The existing theory by Hansen and Sargent (2007) refers to an
additional Sylvester equation but miss another equation for computing the
block matrix weighting the square of non-controllable variables in the welfare
function. There is no need to simulate impulse response functions over a long
period, to compute period loss functions and to sum their discounted value
over this long period, as currently done so far. Welfare is computed for the
case of the new-Keynesian Phillips curve with an auto-regressive cost-push shock.

\textbf{JEL\ classification numbers}: C61, C62, C73, E47, E52, E61, E63.

\textbf{Keywords:} Ramsey optimal policy, Stackelberg dynamic game, algorithm,
forcing variables, augmented linear quadratic regulator, new-Keynesian
Phillips curve.\newpage

\end{abstract}

\section{Introduction}

Dynamic stochastic general equilibrium (DSGE) models include auto-regressive
shocks (Smets and Wouters (2007)). For computing the welfare of Ramsey optimal
policy in DSGE models, one simulates impulse response functions over a long
period, one computes period loss functions and one sums their discounted value
over this long period.

Since Anderson \textit{et al.} (1996) and Hansen and Sargent (2007), the
available theory uses a Riccati equation for controllable variables and a
Sylvester equation for non-controllable variables in order to find the optimal
policy rule and the optimal initial condition for non-predetermined variables.
However, the matrix of the value function allowing to compute welfare is
incomplete. Even worse, computing welfare loss using only the two matrices
solutions of Riccati equation and of Sylvester equation may lead to a strictly
positive value, which is impossible. A third equation is missing in order to
find the matrix related to the squares of the non-controllable variable in the
value function.

We include in the Lagrangian the Lagrange multiplier times the dynamic
equation of the non-controllable variables. This Lagrange multiplier is
omitted in Anderson \textit{et al.} (1996), p.202. Once this Lagrangian
multiplier is included, the symmetry of the Hamiltonian matrix for the full
system of controllable and non-controllable variables is restored. The value
function is the solution of a Riccati equation for matrices related to
controllable \emph{and} non-controllable variables.

In Anderson \textit{et al.} (1996), the Riccati equation only on controllable
variables and the Sylvester equation only on non-controllable variables
corresponds to two block matrix of the solution of our Riccati equation. The
missing block matrix for computing welfare related to the square of
non-controllable variables is found solving this Riccati equation.

This numerical solution of this Riccati equation is coded in the linear
quadratic regulator instruction \texttt{lqr} in SCILAB. We compute the welfare
of Ramsey optimal policy for the new-Keynesian Phillips curve with an
auto-regressive cost-push shock (Gali (2015)).

\section{The Welfare of Ramsey optimal policy}

To derive Ramsey optimal policy a Stackelberg leader-follower model is
analyzed where the government is the leader and the private sector is the
follower. Let $\mathbf{k}_{t}$ be an $n_{k}\times1$ vector of controllable
predetermined state variables with initial conditions $\mathbf{k}_{0}$ given,
$\mathbf{x}_{t}$ an $n_{x}\times1$ vector of non-predetermined endogenous
variables free to jump at $t$ without a given initial condition for
$\mathbf{x}_{0}$, put together in the $\left(  n_{k}+n_{x}\right)  \times
1$vector $\mathbf{y}_{t}=(\mathbf{k}_{t}^{T},\mathbf{x}_{t}^{T})^{T}$. The
$n_{u}\times1$ vector $\mathbf{u}_{t}$ denotes government policy instruments.
We include an $n_{z}\times1$ vector of non-controllable autoregressive shocks
$\mathbf{z}_{t}$. All variables are expressed as absolute or proportional
deviations from a steady state.

The policy maker maximizes the following quadratic function (minimizes the
quadratic loss) subject to an initial condition for $\mathbf{k}_{0}$ and
$\mathbf{z}_{0}$, but not for $\mathbf{x}_{0}$:%

\begin{equation}
-\frac{1}{2}%
{\displaystyle\sum\limits_{t=0}^{+\infty}}
\beta^{t}\left(  \mathbf{y}_{t}^{T}\mathbf{Q}_{yy}\mathbf{y}_{t}%
+2\mathbf{y}_{t}^{T}\mathbf{Q}_{yz}\mathbf{z}_{t}+\mathbf{z}_{t}^{T}%
\mathbf{Q}_{zz}\mathbf{z}_{t}+\mathbf{u}_{t}^{T}\mathbf{R}_{uu}\mathbf{u}%
_{t}\right)  \text{ }%
\end{equation}
where $\beta$ is the policy maker's discount factor. The policymaker's
preferences are the relative weights included in the matrices $\mathbf{Q}$ and
$\mathbf{R}$. $\mathbf{Q}_{yy}\geq\mathbf{0}$ is a $\left(  n_{k}%
+n_{x}\right)  \times\left(  n_{k}+n_{x}\right)  $ positive symmetric
semi-definite matrix, $\mathbf{R}_{uu}>\mathbf{0}$ is a $p\times p$
\emph{strictly} positive symmetric definite matrix, so that the policy maker
has at least a very small concern for the volatility of policy instruments.
The policy transmission mechanism of the private sector's behavior is
summarized by this system of equations:%

\begin{equation}
\left(
\begin{array}
[c]{c}%
E_{t}\mathbf{y}_{t+1}\\
\mathbf{z}_{t+1}%
\end{array}
\right)  =\left(
\begin{array}
[c]{cc}%
\mathbf{A}_{yy} & \mathbf{A}_{yz}\\
\mathbf{0} & \mathbf{A}_{zz}%
\end{array}
\right)  \left(
\begin{array}
[c]{c}%
\mathbf{y}_{t}\\
\mathbf{z}_{t}%
\end{array}
\right)  +\left(
\begin{array}
[c]{c}%
\mathbf{B}_{yu}\\
\mathbf{0}%
\end{array}
\right)  \mathbf{u}_{t}, \label{Transmission}%
\end{equation}
where $\mathbf{A}$ is an $\left(  n_{k}+n_{x}+n_{z}\right)  \times\left(
n_{k}+n_{x}+n_{z}\right)  $ matrix and $\mathbf{B}$ is the $\left(
n_{k}+n_{x}+n_{z}\right)  \times p$ matrix of marginal effects of policy
instruments $\mathbf{u}_{t}$ on next period policy targets $\mathbf{y}_{t+1}$.
The certainty equivalence principle of the linear quadratic regulator allows
us to work with a non-stochastic model (Anderson \textit{et al.} (1996)).
Anderson \textit{et al.} (1996) is word by word Hansen and Sargent (2007)
chapter 5, so we refer only to Anderson \textit{et al.} (1996) in what follows.

The government chooses sequences $\left\{  \mathbf{u}_{t},\mathbf{x}%
_{t},\mathbf{k}_{t+1}\right\}  _{t=0}^{+\infty}$ taking into account the
policy transmission mechanism (\ref{Transmission}) and boundary conditions
detailed below.

Essential boundary conditions are the initial conditions of predetermined
variables $\mathbf{k}_{0}$ and $\mathbf{z}_{0}$ which are given. Natural
boundary conditions are chosen by the policy maker to anchor the unique
optimal initial values of the private sector's forward-looking variables. The
policy maker's Lagrange multipliers of the private sector's forward (Lagrange
multipliers) variables are \emph{predetermined at the value zero: }%
$\frac{\partial L}{\partial\mathbf{x}_{0}}=\mathbf{\mu}_{\mathbf{x},t=0}=0$ in
order to determine the unique optimal initial value $\mathbf{x}_{0}%
=\mathbf{x}_{0}^{\ast}$ of the private sector's forward variables.

Anderson \textit{et al.} (1996) assume a bounded discounted quadratic loss function:%

\begin{equation}
E\left(
{\displaystyle\sum\limits_{t=0}^{+\infty}}
\beta^{t}\left(  \mathbf{y}_{t}^{T}\mathbf{y}_{t}+\mathbf{z}_{t}^{T}%
\mathbf{z}_{t}+\mathbf{u}_{t}^{T}\mathbf{u}_{t}\right)  \right)
<+\infty\text{ }%
\end{equation}
which implies%

\begin{align*}
\underset{t\rightarrow+\infty}{\lim}\beta^{t}\mathbf{z}_{t}  &  =\mathbf{z}%
^{\ast}=\mathbf{0}\text{, }\mathbf{z}_{t}\text{ bounded,}\\
\underset{t\rightarrow+\infty}{\lim}\beta^{t}\mathbf{y}_{t}  &  =\mathbf{y}%
^{\ast}=\mathbf{0}\Leftrightarrow\underset{t\rightarrow+\infty}{\lim}%
\frac{\partial L}{\partial\mathbf{y}_{t}}=\mathbf{0}=\underset{t\rightarrow
+\infty}{\lim}\beta^{t}\mathbf{\mu}_{t}\text{, }\mathbf{\mu}_{t}\text{
bounded.}%
\end{align*}

The bounded discounted quadratic loss function implies to select eigenvalues
of the dynamic system such that $\left\vert \left(  \beta\lambda_{i}%
^{2}\right)  ^{t}\right\vert <\left\vert \beta\lambda_{i}^{2}\right\vert <1$
or equivalently such that: $\left\vert \lambda_{i}\right\vert <1/\sqrt{\beta}%
$. A preliminary step is to multiply matrices by $\sqrt{\beta}$ as follows:
$\sqrt{\beta}\mathbf{A}_{yy}$\ $\sqrt{\beta}\mathbf{B}_{y}$ in order to apply
the formulas of Riccati equations for the \emph{non}-discounted linear
quadratic regulator augmented by auto-regressive shocks.

\textbf{Assumption 1:} The matrix pair ($\sqrt{\beta}\mathbf{A}_{yy}$%
\ $\sqrt{\beta}\mathbf{B}_{yu}$) is Kalman controllable if the controllability
matrix has full rank:
\begin{equation}
\text{rank }\left(  \sqrt{\beta}\mathbf{B}_{yu}\text{ \ }\beta\mathbf{A}%
_{yy}\mathbf{B}_{yu}\text{ \ }\beta^{\frac{3}{2}}\mathbf{A}_{yy}^{2}%
\mathbf{B}_{yu}\text{ \ ... \ }\beta^{\frac{n_{k}+n_{x}}{2}}\mathbf{A}%
_{yy}^{n_{k}+n_{x}-1}\mathbf{B}_{yu}\right)  =n_{k}+n_{x}.
\end{equation}

\textbf{Assumption 2:} The system is can be stabilized when the transition
matrix $\mathbf{A}_{zz}$ for the non-controllable auto-regressive variables
has eigenvalues such that $\left\vert \lambda_{i}\right\vert <1/\sqrt{\beta}$.

The policy maker's choice can be solved with Lagrange multipliers. The
Lagrangian includes not only the constraints of the private sector's policy
transmission mechanisms multiplied by their respective Lagrange multipliers
$2\beta^{t+1}\mathbf{\mu}_{t+1}$, \textbf{BUT\ ALSO} \ \emph{the constraints
of the non-controllable variables dynamics with their respective Lagrange
multiplier }$2\beta^{t+1}\mathbf{\nu}_{t+1}$\emph{, which were omitted in
Anderson et al. (1996), p.202}.%

\begin{equation}
-\frac{1}{2}%
{\displaystyle\sum\limits_{t=0}^{+\infty}}
\begin{array}
[c]{c}%
\beta^{t}\left[  \mathbf{y}_{t}^{T}\mathbf{Q}_{yy}\mathbf{y}_{t}%
+2\mathbf{y}_{t}^{T}\mathbf{Q}_{yz}\mathbf{z}_{t}+\mathbf{z}_{t}^{T}%
\mathbf{Q}_{zz}\mathbf{z}_{t}+\mathbf{u}_{t}^{T}\mathbf{R}_{uu}\mathbf{u}%
_{t}\right]  +\\
2\beta^{t+1}\mathbf{\mu}_{t+1}\left[  \mathbf{A}_{yy}\mathbf{y}_{t}%
+\mathbf{A}_{yz}\mathbf{z}_{t}+\mathbf{B}_{yu}u_{t}-\mathbf{y}_{t+1}\right]
+\\
2\beta^{t+1}\mathbf{\nu}_{t+1}\left[  \mathbf{A}_{zz}\mathbf{z}_{t}%
+\mathbf{0}_{z}u_{t}-\mathbf{z}_{t+1}\right]  .
\end{array}
\text{ }%
\end{equation}

The first order conditions are:%

\begin{align*}
\frac{\partial L}{\partial x_{t}}  &  =\mathbf{Rx}_{t}+\beta\mathbf{B\gamma
}_{t+1}=0\Rightarrow\mathbf{x}_{t}=-\beta\mathbf{R}^{-1}\mathbf{B\gamma}%
_{t+1}\\
\frac{\partial L}{\partial\pi_{t}}  &  =\mathbf{Q\pi}_{t}+\beta\mathbf{A\gamma
}_{t+1}-\mathbf{\gamma}_{t}=0\\
\frac{\partial L}{\partial z_{t}}  &  =\beta\mathbf{\gamma}_{t+1}%
\mathbf{A}_{yz}+\beta\mathbf{\delta}_{t+1}\mathbf{A}_{zz}-\mathbf{\delta}%
_{t}=0
\end{align*}

The policy instrument are substituted by $\mathbf{x}_{t}=-\beta\mathbf{R}%
^{-1}\mathbf{B\gamma}_{t+1}$ in the transmission mechanism equation. The
Hamiltonian of the linear quadratic regulator has the usual block matrices on
left hand side and right hand side:%

\[
L=\left(
\begin{array}
[c]{cc}%
\mathbf{I} & \mathbf{-}\beta\mathbf{B}_{(y,z)u}\mathbf{R}_{uu}^{-1}%
\mathbf{B}_{(y,z)u}^{T}\\
\mathbf{0} & \beta\mathbf{A}^{T}%
\end{array}
\right)  \text{ and }N=\left(
\begin{array}
[c]{cc}%
\mathbf{A} & \mathbf{0}\\
\mathbf{-Q} & \mathbf{I}%
\end{array}
\right)
\]

with this particular block decomposition between controllable variables
$\mathbf{y}_{t}$ and non-controllable variables $\mathbf{z}_{t}$:%

\[
\left(
\begin{array}
[c]{cccc}%
\mathbf{I} & \mathbf{0} & \mathbf{-}\beta\mathbf{B}_{yu}\mathbf{R}_{uu}%
^{-1}\mathbf{B}_{yu}^{T} & \mathbf{0}\\
\mathbf{0} & \mathbf{I} & \mathbf{0} & \mathbf{0}\\
\mathbf{0} & \mathbf{0} & \beta\mathbf{A}_{yy} & \mathbf{0}\\
\mathbf{0} & \mathbf{0} & \beta\mathbf{A}_{yz} & \beta\mathbf{A}_{zz}%
\end{array}
\right)  \left(
\begin{array}
[c]{c}%
y_{t+1}\\
z_{t+1}\\
\mu_{t+1}\\
\nu_{t+1}%
\end{array}
\right)  =\left(
\begin{array}
[c]{cccc}%
\mathbf{A}_{yy} & \mathbf{A}_{yz} & \mathbf{0} & \mathbf{0}\\
\mathbf{0} & \mathbf{A}_{zz} & \mathbf{0} & \mathbf{0}\\
-\mathbf{Q}_{yy} & -\mathbf{Q}_{yz} & \mathbf{I} & \mathbf{0}\\
-\mathbf{Q}_{yz} & -\mathbf{Q}_{zz} & \mathbf{0} & \mathbf{I}%
\end{array}
\right)  \left(
\begin{array}
[c]{c}%
y_{t}\\
z_{t}\\
\mu_{t+1}\\
\nu_{t+1}%
\end{array}
\right)
\]

The specificity of non-controllable variables is that the following matrix
includes three blocks with zeros, which is not the case for controllable variables:%

\[
-\beta\left(
\begin{array}
[c]{c}%
\mathbf{B}_{yu}\\
\mathbf{0}%
\end{array}
\right)  \left(  \mathbf{R}_{uu}^{-1}\right)  \left(
\begin{array}
[c]{c}%
\mathbf{B}_{yu}\\
\mathbf{0}%
\end{array}
\right)  ^{T}=\left(
\begin{array}
[c]{cc}%
\mathbf{-}\beta\mathbf{B}_{yu}\mathbf{R}_{uu}^{-1}\mathbf{B}_{yu}^{T} &
\mathbf{0}\\
\mathbf{0} & \mathbf{0}%
\end{array}
\right)
\]

If $\mathbf{L}$ is non-singular, the Hamiltonian matrix $\mathbf{H=L}%
^{-1}\mathbf{N}$ is a symplectic matrix. With the equations of the Lagrange
multipliers $\mathbf{\nu}_{t+1}$, all the roots $\rho_{i}$ of $A_{zz}$ have
their mirror roots $\left(  1/\beta\rho_{i}\right)  $ which were all missing
in Anderson \emph{et al.} (1996).

The value function for welfare involve the matrix $\mathbf{P}$ such that:
\[
L_{t=0}=\left(
\begin{array}
[c]{c}%
y_{0}\\
z_{0}%
\end{array}
\right)  ^{T}\left(
\begin{array}
[c]{cc}%
\mathbf{P}_{yy} & \mathbf{P}_{yz}\\
\mathbf{P}_{yz} & \mathbf{P}_{zz}%
\end{array}
\right)  \allowbreak\left(
\begin{array}
[c]{c}%
y_{0}\\
z_{0}%
\end{array}
\right)
\]

A stabilizing solution of the Hamiltonian system satisfies (Anderson \emph{et
al.} (1996)):%
\begin{equation}
\frac{\partial L}{\partial\mathbf{y}_{t=0}}=\mathbf{\mu}_{0}=\mathbf{P}%
_{y}\mathbf{y}_{0}+\mathbf{P}_{z}\mathbf{z}_{0}.
\end{equation}

The optimal rule of the augmented linear quadratic regulator is:%
\begin{equation}
\mathbf{u}_{t}=\mathbf{F}_{y}\mathbf{y}_{t}+\mathbf{F}_{z}\mathbf{z}_{t}.
\end{equation}

The matrix $\mathbf{P}$ is solution of this Riccati equation:%
\begin{align*}
\left(
\begin{array}
[c]{cc}%
\mathbf{P}_{yy} & \mathbf{P}_{yz}\\
\mathbf{P}_{yz} & \mathbf{P}_{zz}%
\end{array}
\right)  \allowbreak &  =\left(
\begin{array}
[c]{cc}%
\mathbf{Q}_{yy} & \mathbf{Q}_{yz}\\
\mathbf{Q}_{yz} & \mathbf{Q}_{zz}%
\end{array}
\right)  +\beta\left(
\begin{array}
[c]{cc}%
\mathbf{A}_{yy} & \mathbf{A}_{yz}\\
\mathbf{0} & \mathbf{A}_{zz}%
\end{array}
\right)  ^{T}\left(
\begin{array}
[c]{cc}%
\mathbf{P}_{yy} & \mathbf{P}_{yz}\\
\mathbf{P}_{yz} & \mathbf{P}_{zz}%
\end{array}
\right)  \allowbreak\left(
\begin{array}
[c]{cc}%
\mathbf{A}_{yy} & \mathbf{A}_{yz}\\
\mathbf{0} & \mathbf{A}_{zz}%
\end{array}
\right) \\
&  -\allowbreak\beta\left(
\begin{array}
[c]{cc}%
\mathbf{A}_{yy} & \mathbf{A}_{yz}\\
\mathbf{0} & \mathbf{A}_{zz}%
\end{array}
\right)  ^{T}\left(
\begin{array}
[c]{cc}%
\mathbf{P}_{yy} & \mathbf{P}_{yz}\\
\mathbf{P}_{yz} & \mathbf{P}_{zz}%
\end{array}
\right)  \allowbreak\left(
\begin{array}
[c]{c}%
\mathbf{B}_{yu}\\
\mathbf{0}%
\end{array}
\right) \\
&  \left(  \mathbf{R}_{uu}+\beta\mathbf{B}_{yu}^{^{\prime}}\mathbf{P}%
_{yy}\mathbf{B}_{yu}\right)  ^{-1}\beta\left(
\begin{array}
[c]{c}%
\mathbf{B}_{yu}\\
\mathbf{0}%
\end{array}
\right)  ^{T}\left(
\begin{array}
[c]{cc}%
\mathbf{P}_{yy} & \mathbf{P}_{yz}\\
\mathbf{P}_{yz} & \mathbf{P}_{zz}%
\end{array}
\right)  \allowbreak\left(
\begin{array}
[c]{cc}%
\mathbf{A}_{yy} & \mathbf{A}_{yz}\\
\mathbf{0} & \mathbf{A}_{zz}%
\end{array}
\right)
\end{align*}

The matrix to be inverted in the Riccati equation is modified due to
non-controllable variables:%
\[
\left(
\begin{array}
[c]{c}%
\mathbf{B}_{yu}\\
\mathbf{0}%
\end{array}
\right)  ^{T}\left(
\begin{array}
[c]{cc}%
\mathbf{P}_{yy} & \mathbf{P}_{yz}\\
\mathbf{P}_{yz} & \mathbf{P}_{zz}%
\end{array}
\right)  \allowbreak\left(
\begin{array}
[c]{c}%
\mathbf{B}_{yu}\\
\mathbf{0}%
\end{array}
\right)  =\mathbf{B}_{yu}^{T}\mathbf{P}_{yy}\mathbf{B}_{yu}%
\]

This Riccati equation is written as:%

\begin{align*}
&  \left(
\begin{array}
[c]{cc}%
\mathbf{P}_{yy} & \mathbf{P}_{yz}\\
\mathbf{P}_{yz} & \mathbf{P}_{zz}%
\end{array}
\right)  =\left(
\begin{array}
[c]{cc}%
\mathbf{Q}_{yy} & \mathbf{Q}_{yz}\\
\mathbf{Q}_{yz} & \mathbf{Q}_{zz}%
\end{array}
\right) \\
&  +\beta\left(
\begin{array}
[c]{cc}%
\mathbf{A}_{yy}^{T}\mathbf{P}_{yy}\mathbf{A}_{yy} & \mathbf{A}_{yy}^{T}\left(
\mathbf{P}_{yy}\mathbf{A}_{yz}+\mathbf{P}_{yz}\mathbf{A}_{zz}\right) \\
\left(  \mathbf{A}_{yy}^{T}\mathbf{P}_{yy}\mathbf{A}_{yz}+\mathbf{A}_{yy}%
^{T}\mathbf{P}_{yz}\mathbf{A}_{zz}\right)  ^{T} & \mathbf{A}_{yz}^{T}\left(
\mathbf{P}_{yy}\mathbf{A}_{yz}+\mathbf{P}_{yz}\mathbf{A}_{zz}\right)
+\mathbf{A}_{zz}^{T}\left(  \mathbf{P}_{yz}\mathbf{A}_{yz}+\mathbf{P}%
_{zz}\mathbf{A}_{zz}\right)
\end{array}
\right) \\
&  -\allowbreak\beta^{2}\left(
\begin{array}
[c]{c}%
\mathbf{A}_{yy}^{T}\mathbf{P}_{yy}\mathbf{B}_{yu}\\
\mathbf{A}_{yz}^{T}\mathbf{P}_{yy}\mathbf{B}_{yu}+\mathbf{A}_{zz}%
^{T}\mathbf{P}_{yz}\mathbf{B}_{yu}%
\end{array}
\right)  \left(  \mathbf{R}_{uu}+\beta\mathbf{B}_{yu}^{^{\prime}}%
\mathbf{P}_{yy}\mathbf{B}_{yu}\right)  ^{-1}\\
&  \left(
\begin{array}
[c]{cc}%
\mathbf{B}_{yu}^{T}\mathbf{P}_{yy}\mathbf{A}_{yy} & \mathbf{B}_{yu}^{T}\left(
\mathbf{P}_{yy}\mathbf{A}_{yz}+\mathbf{P}_{yz}\mathbf{A}_{zz}\right)
\end{array}
\right)
\end{align*}

where $\mathbf{P}_{yy}$ solves the matrix Riccati equation (Anderson at al. (1996)):%

\[
\mathbf{P}_{yy}\mathbf{=}\mathbf{Q}_{yy}+\beta\mathbf{A}_{yy}^{T}%
\mathbf{P}_{y}\mathbf{A}_{yy}-\beta\mathbf{A}_{yy}^{T}\mathbf{P}_{y}%
\mathbf{B}_{y}\left(  \mathbf{R}_{uu}+\beta\mathbf{B}_{yu}^{^{T}}%
\mathbf{P}_{yy}\mathbf{B}_{yu}\right)  ^{-1}\beta\mathbf{B}_{y}^{T}%
\mathbf{P}_{y}\mathbf{A}_{yy},
\]

where $\mathbf{F}_{y}$ is computed knowing $\mathbf{P}_{y}$:%

\begin{equation}
\mathbf{F}_{y}=-\left(  \mathbf{R}_{uu}+\beta\mathbf{B}_{yu}^{T}%
\mathbf{P}_{yy}\mathbf{B}_{yu}\right)  ^{-1}\beta\mathbf{B}_{y}^{T}%
\mathbf{P}_{y}\mathbf{A}_{yy},
\end{equation}

where $\mathbf{P}_{yz}$ solves the matrix Sylvester equation knowing
$\mathbf{P}_{y}$ and $\mathbf{F}_{y}$ (Anderson et al. (1996)):%

\[
\mathbf{P}_{yz}=\mathbf{Q}_{yz}+\beta\left(  \mathbf{A}_{yy}+\mathbf{B}%
_{y}\mathbf{F}_{y}\right)  ^{T}\mathbf{P}_{y}\mathbf{A}_{yz}+\beta\left(
\mathbf{A}_{yy}+\mathbf{B}_{y}\mathbf{F}_{y}\right)  ^{T}\mathbf{P}%
_{z}\mathbf{A}_{zz}%
\]

\emph{where} $\mathbf{P}_{zz}$, \emph{which is missing in Anderson et al.
(1996), solves the matrix Sylvester equation knowing }$P_{y}$\emph{, }$F_{y}%
$\emph{ and }$P_{yz}$:%

\begin{align*}
\mathbf{P}_{zz}  &  =\mathbf{Q}_{zz}+\mathbf{A}_{yz}^{T}\left(  \mathbf{P}%
_{yy}\mathbf{A}_{yz}+\mathbf{P}_{yz}\mathbf{A}_{zz}\right)  +\mathbf{A}%
_{zz}^{T}\left(  \mathbf{P}_{yz}\mathbf{A}_{yz}+\mathbf{P}_{zz}\mathbf{A}%
_{zz}\right) \\
&  -\allowbreak\beta^{2}\left(  \mathbf{A}_{yz}^{T}\mathbf{P}_{yy}%
\mathbf{B}_{yu}+\mathbf{A}_{zz}^{T}\mathbf{P}_{yz}\mathbf{B}_{yu}\right)
\left(  \mathbf{R}_{uu}+\beta\mathbf{B}_{yu}^{^{\prime}}\mathbf{P}%
_{yy}\mathbf{B}_{yu}\right)  ^{-1}\\
&  \mathbf{B}_{yu}^{T}\left(  \mathbf{P}_{yy}\mathbf{A}_{yz}+\mathbf{P}%
_{yz}\mathbf{A}_{zz}\right)
\end{align*}

\emph{Now, at last, we know }$\mathbf{P}_{zz}$\emph{ so that we can compute
the welfare of Ramsey optimal policy}:

\begin{proposition}
The welfare of Ramsey optimal policy is:%
\[
-\left(
\begin{array}
[c]{c}%
\mathbf{k}_{0}\\
\mathbf{z}_{0}%
\end{array}
\right)  ^{T}\left(
\begin{array}
[c]{cc}%
\mathbf{P}_{kk}-\mathbf{P}_{kk}\mathbf{P}_{xx}^{-1}\mathbf{P}_{xk} &
\mathbf{P}_{kz}-\mathbf{P}_{kx}\mathbf{P}_{xx}^{-1}\mathbf{P}_{xz}\\
\mathbf{P}_{zx}-\mathbf{P}_{zk}\mathbf{P}_{xx}^{-1}\mathbf{P}_{xk} &
\mathbf{P}_{zz}-\mathbf{P}_{zk}\mathbf{P}_{xx}^{-1}\mathbf{P}_{xz}%
\end{array}
\right)  \left(
\begin{array}
[c]{c}%
\mathbf{k}_{0}\\
\mathbf{z}_{0}%
\end{array}
\right)
\]

\end{proposition}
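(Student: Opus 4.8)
The plan is to start from the quadratic value function established above, $L_{t=0}=\left(\begin{array}[c]{c}\mathbf{y}_{0}\\\mathbf{z}_{0}\end{array}\right)^{T}\mathbf{P}\left(\begin{array}[c]{c}\mathbf{y}_{0}\\\mathbf{z}_{0}\end{array}\right)$, where $\mathbf{P}$ is the stabilising solution of the Riccati equation and $\mathbf{y}_{0}=(\mathbf{k}_{0}^{T},\mathbf{x}_{0}^{T})^{T}$. All that is left to do is to eliminate the non-predetermined jump variables $\mathbf{x}_{0}$ — which are not free data but are pinned down by the natural boundary condition — and to re-express the loss in the genuinely exogenous arguments $\mathbf{k}_{0}$ and $\mathbf{z}_{0}$; the welfare is then minus this loss, the $\frac{1}{2}$ of the objective being absorbed in the normalisation of $\mathbf{P}$.

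First I would block-decompose $\mathbf{P}_{yy}$ and $\mathbf{P}_{yz}$ along the predetermined/non-predetermined partition, writing $\mathbf{P}_{yy}=\left(\begin{array}[c]{cc}\mathbf{P}_{kk}&\mathbf{P}_{kx}\\\mathbf{P}_{xk}&\mathbf{P}_{xx}\end{array}\right)$ and $\mathbf{P}_{yz}=\left(\begin{array}[c]{c}\mathbf{P}_{kz}\\\mathbf{P}_{xz}\end{array}\right)$, with $\mathbf{P}_{xk}=\mathbf{P}_{kx}^{T}$ and $\mathbf{P}_{zx}=\mathbf{P}_{xz}^{T}$ by symmetry of $\mathbf{P}$. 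Reading off the $\mathbf{x}$-block of the costate relation $\mathbf{\mu}_{0}=\mathbf{P}_{yy}\mathbf{y}_{0}+\mathbf{P}_{yz}\mathbf{z}_{0}$, the natural boundary condition $\mathbf{\mu}_{\mathbf{x},0}=\partial L/\partial\mathbf{x}_{0}=\mathbf{0}$ becomes $\mathbf{P}_{xk}\mathbf{k}_{0}+\mathbf{P}_{xx}\mathbf{x}_{0}+\mathbf{P}_{xz}\mathbf{z}_{0}=\mathbf{0}$, which I solve for the optimal initial jump $\mathbf{x}_{0}^{\ast}=-\mathbf{P}_{xx}^{-1}\left(\mathbf{P}_{xk}\mathbf{k}_{0}+\mathbf{P}_{xz}\mathbf{z}_{0}\right)$. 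Since $L_{t=0}$ is quadratic in $\mathbf{x}_{0}$ with Hessian proportional to $\mathbf{P}_{xx}$, this $\mathbf{x}_{0}^{\ast}$ is equally the unconstrained minimiser of the loss over $\mathbf{x}_{0}$ — the consistency check that the natural boundary condition does select the welfare-optimal initial condition.

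Next I would substitute $\mathbf{x}_{0}^{\ast}$ into the expanded form $L_{t=0}=\mathbf{k}_{0}^{T}\mathbf{P}_{kk}\mathbf{k}_{0}+2\mathbf{k}_{0}^{T}\mathbf{P}_{kx}\mathbf{x}_{0}+\mathbf{x}_{0}^{T}\mathbf{P}_{xx}\mathbf{x}_{0}+2\mathbf{k}_{0}^{T}\mathbf{P}_{kz}\mathbf{z}_{0}+2\mathbf{x}_{0}^{T}\mathbf{P}_{xz}\mathbf{z}_{0}+\mathbf{z}_{0}^{T}\mathbf{P}_{zz}\mathbf{z}_{0}$ and collect terms. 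With $\mathbf{v}=\mathbf{P}_{xk}\mathbf{k}_{0}+\mathbf{P}_{xz}\mathbf{z}_{0}$, the quadratic term $\left(\mathbf{x}_{0}^{\ast}\right)^{T}\mathbf{P}_{xx}\mathbf{x}_{0}^{\ast}$ contributes $+\mathbf{v}^{T}\mathbf{P}_{xx}^{-1}\mathbf{v}$ while the two cross-terms jointly contribute $-2\mathbf{v}^{T}\mathbf{P}_{xx}^{-1}\mathbf{v}$, leaving the net correction $-\mathbf{v}^{T}\mathbf{P}_{xx}^{-1}\mathbf{v}$; expanding this in $(\mathbf{k}_{0},\mathbf{z}_{0})$ produces exactly the Schur-complement subtractions $\mathbf{P}_{kx}\mathbf{P}_{xx}^{-1}\mathbf{P}_{xk}$, $\mathbf{P}_{kx}\mathbf{P}_{xx}^{-1}\mathbf{P}_{xz}$ and $\mathbf{P}_{zx}\mathbf{P}_{xx}^{-1}\mathbf{P}_{xz}$ from the $\mathbf{k}_{0}\mathbf{k}_{0}$, $\mathbf{k}_{0}\mathbf{z}_{0}$ and $\mathbf{z}_{0}\mathbf{z}_{0}$ blocks. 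Multiplying by $-1$ gives the welfare in the stated block-matrix form. The remaining details — the scalar transposition identifying the two cross-terms, and the sign and discount-factor bookkeeping — are routine and I would not write them out.

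The one genuine obstacle is the invertibility of $\mathbf{P}_{xx}$, needed both to define $\mathbf{x}_{0}^{\ast}$ and to form the Schur complements. I would obtain it from the hypotheses guaranteeing a unique stabilising solution of the Hamiltonian system (Assumptions 1 and 2 together with $\mathbf{R}_{uu}>\mathbf{0}$): under these, the natural boundary condition $\mathbf{\mu}_{\mathbf{x},0}=\mathbf{0}$ must determine $\mathbf{x}_{0}^{\ast}$ uniquely for every pair $(\mathbf{k}_{0},\mathbf{z}_{0})$, which is precisely the statement that $\mathbf{P}_{xx}$ is non-singular. The same symmetry considerations show that the $(2,1)$ block of the welfare matrix is the transpose of the $(1,2)$ block, so the matrix is symmetric, as it must be.
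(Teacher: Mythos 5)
Your proposal is correct and follows essentially the same route as the paper: impose the natural boundary condition $\mathbf{\mu}_{\mathbf{x},0}=\partial L/\partial\mathbf{x}_{0}=\mathbf{0}$ to solve for the optimal initial jump $\mathbf{x}_{0}^{\ast}=-\mathbf{P}_{xx}^{-1}\left(\mathbf{P}_{xk}\mathbf{k}_{0}+\mathbf{P}_{xz}\mathbf{z}_{0}\right)$ and substitute it back into the quadratic value function, yielding the Schur-complement blocks; the paper merely packages this substitution as a congruence transformation of $\mathbf{P}$ by a block matrix rather than expanding and collecting terms. Your version is in fact slightly more careful than the printed proof, since you keep the minus sign in $\mathbf{x}_{0}^{\ast}$ consistently and you flag the invertibility of $\mathbf{P}_{xx}$, which the paper uses without comment.
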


\begin{proof}
Welfare is a function of controllable non-predetermined variables
$\mathbf{x}_{0}$, controllable predetermined variables $\mathbf{k}_{0}$ and
non controllable predetermined auto-regressive shocks $\mathbf{z}_{0}$:%
\[
\left(
\begin{array}
[c]{c}%
\mathbf{x}_{0}\\
\mathbf{k}_{0}\\
\mathbf{z}_{0}%
\end{array}
\right)  ^{T}\left(
\begin{array}
[c]{ccc}%
\mathbf{P}_{xx} & \mathbf{P}_{xk} & \mathbf{P}_{xz}\\
\mathbf{P}_{kx} & \mathbf{P}_{kk} & \mathbf{P}_{kz}\\
\mathbf{P}_{zk} & \mathbf{P}_{zx} & \mathbf{P}_{zz}%
\end{array}
\right)  \left(
\begin{array}
[c]{c}%
\mathbf{x}_{0}\\
\mathbf{k}_{0}\\
\mathbf{z}_{0}%
\end{array}
\right)
\]
Ramsey optimal initial anchor of non-predetermined variables $\mathbf{x}_{0}$
is (Ljungqvist L. and Sargent T.J. (2012), chapter 19):%
\[
\frac{\partial L}{\partial\mathbf{x}_{0}}=\mathbf{P}_{xk}\mathbf{k}%
_{0}+\mathbf{P}_{xx}\mathbf{x}_{0}+\mathbf{P}_{xz}\mathbf{z}_{0}%
=\mathbf{0\Rightarrow x}_{0}=\mathbf{P}_{xx}^{-1}\mathbf{P}_{xk}\mathbf{k}%
_{0}+\mathbf{P}_{xx}^{-1}\mathbf{P}_{xz}\mathbf{z}_{0}%
\]
Hence, the welfare matrix of Ramsey optimal policy is:%
\begin{align*}
&  \left(
\begin{array}
[c]{ccc}%
\mathbf{0} & -\mathbf{P}_{xx}^{-1}\mathbf{P}_{xk} & -\mathbf{P}_{xx}%
^{-1}\mathbf{P}_{xz}\\
\mathbf{0} & \mathbf{I} & \mathbf{0}\\
\mathbf{0} & \mathbf{0} & \mathbf{I}%
\end{array}
\right)  ^{T}\left(
\begin{array}
[c]{ccc}%
\mathbf{P}_{xx} & \mathbf{P}_{xk} & \mathbf{P}_{xz}\\
\mathbf{P}_{kx} & \mathbf{P}_{kk} & \mathbf{P}_{kz}\\
\mathbf{P}_{zk} & \mathbf{P}_{zx} & \mathbf{P}_{zz}%
\end{array}
\right)  \left(
\begin{array}
[c]{ccc}%
\mathbf{0} & -\mathbf{P}_{xx}^{-1}\mathbf{P}_{xk} & -\mathbf{P}_{xx}%
^{-1}\mathbf{P}_{xz}\\
\mathbf{0} & \mathbf{I} & \mathbf{0}\\
\mathbf{0} & \mathbf{0} & \mathbf{I}%
\end{array}
\right) \\
&  =\left(
\begin{array}
[c]{ccc}%
\mathbf{0} & \mathbf{0} & \mathbf{0}\\
-\left(  \mathbf{P}_{xx}^{-1}\mathbf{P}_{xk}\right)  ^{T} & \mathbf{1} &
\mathbf{0}\\
-\left(  \mathbf{P}_{xx}^{-1}\mathbf{P}_{xz}\right)  ^{T} & \mathbf{0} &
\mathbf{1}%
\end{array}
\right)  \left(
\begin{array}
[c]{ccc}%
\mathbf{0} & \mathbf{0} & \mathbf{0}\\
\mathbf{0} & \mathbf{P}_{kk}-\mathbf{P}_{kk}\mathbf{P}_{xx}^{-1}%
\mathbf{P}_{xk} & \mathbf{P}_{kz}-\mathbf{P}_{kx}\mathbf{P}_{xx}%
^{-1}\mathbf{P}_{xz}\\
\mathbf{0} & \mathbf{P}_{zx}-\mathbf{P}_{zk}\mathbf{P}_{xx}^{-1}%
\mathbf{P}_{xk} & \mathbf{P}_{zz}-\mathbf{P}_{zk}\mathbf{P}_{xx}%
^{-1}\mathbf{P}_{xz}%
\end{array}
\right)  \allowbreak\\
&  =\left(
\begin{array}
[c]{ccc}%
\mathbf{0} & \mathbf{0} & \mathbf{0}\\
\mathbf{0} & \mathbf{P}_{kk}-\mathbf{P}_{kk}\mathbf{P}_{xx}^{-1}%
\mathbf{P}_{xk} & \mathbf{P}_{kz}-\mathbf{P}_{kx}\mathbf{P}_{xx}%
^{-1}\mathbf{P}_{xz}\\
\mathbf{0} & \mathbf{P}_{zx}-\mathbf{P}_{zk}\mathbf{P}_{xx}^{-1}%
\mathbf{P}_{xk} & \mathbf{P}_{zz}-\mathbf{P}_{zk}\mathbf{P}_{xx}%
^{-1}\mathbf{P}_{xz}%
\end{array}
\right)  \allowbreak
\end{align*}

\end{proof}

\section{New Keynesian Phillips Curve Example}

The new-Keynesian Phillips curve constitutes the monetary policy transmission
mechanism:%
\[
\pi_{t}=\beta E_{t}\left[  \pi_{t+1}\right]  +\kappa x_{t}+z_{t}\text{ where
}\kappa>0\text{, }0<\beta<1\text{, }%
\]
where $x_{t}$ represents the output gap, i.e. the deviation between (log)
output and its efficient level. $\pi_{t}$ denotes the rate of inflation
between periods $t-1$ and $t$ and plays the role of the vector of
forward-looking variables $\mathbf{x}_{t}$ in the above general case. $\beta$
denotes the discount factor. $E_{t}$ denotes the expectation operator. The
cost push shock $z_{t}$ includes an exogenous auto-regressive component:%
\[
z_{t}=\rho z_{t-1}+\varepsilon_{t}\text{ where }0<\rho<1\text{ and
}\varepsilon_{t}\text{ i.i.d. normal }N\left(  0,\sigma_{\varepsilon}%
^{2}\right)  ,
\]
where $\rho$ denotes the auto-correlation parameter and $\varepsilon_{t}$ is
identically and independently distributed (i.i.d.) following a normal
distribution with constant variance $\sigma_{\varepsilon}^{2}$. The welfare
loss function is such that the policy target is inflation and the policy
instrument is the output gap (Gali (2015), chapter 5):%
\begin{align*}
&  \max-\frac{1}{2}E_{0}%
{\displaystyle\sum\limits_{t=0}^{t=+\infty}}
\beta^{t}\left(  \pi_{t}^{2}+\frac{\kappa}{\varepsilon}x_{t}^{2}\right) \\
\left(
\begin{array}
[c]{c}%
E_{t}\pi_{t+1}\\
z_{t+1}%
\end{array}
\right)   &  =\mathbf{A}\left(
\begin{array}
[c]{c}%
\pi_{t}\\
z_{t}%
\end{array}
\right)  +\mathbf{B}x_{t}+\left(
\begin{array}
[c]{c}%
0_{y}\\
1
\end{array}
\right)  \varepsilon_{t}%
\end{align*}
There is one controllable non-predetermined variable: $\mathbf{x}_{t}=\pi_{t}%
$. There is no controllable predetermined variable ($\mathbf{k}_{t}%
=\mathbf{0}$). Gali's (2015) calibration is:%
\begin{align*}
\sqrt{\beta}\mathbf{A}  &  \mathbf{=}\sqrt{0.99}\left(
\begin{array}
[c]{cc}%
\mathbf{A}_{xx}=-\frac{1}{\beta}=\frac{1}{0.99} & \mathbf{A}_{xz}=-\frac
{1}{\beta}=-\frac{1}{0.99}\\
0 & \mathbf{A}_{zz}=\rho=0.8
\end{array}
\right)  \text{,}\\
\sqrt{\beta}\mathbf{B}  &  \mathbf{=}\sqrt{0.99}\left(
\begin{array}
[c]{c}%
\mathbf{B}_{x}=-\frac{\kappa}{\beta}=-\frac{0.1275}{0.99}\\
\mathbf{B}_{z}=0
\end{array}
\right)  \text{, }\mathbf{Q}\mathbf{=}\left(
\begin{array}
[c]{cc}%
\mathbf{Q}_{xx}=1 & \mathbf{Q}_{xz}=0\\
\mathbf{Q}_{xz}=0 & \mathbf{Q}_{zz}=0
\end{array}
\right)  \text{, }\mathbf{R=}\frac{\kappa}{\varepsilon}=\frac{0.1275}{6}%
\end{align*}
One multiplies matrices by $\sqrt{\beta}$ in order to take the discount factor
in the Riccati equation. The welfare matrix is computed using SCILAB\ lqr
instruction in the appendix:
\[
\mathbf{P=}\left(
\begin{array}
[c]{cc}%
\mathbf{P}_{xx} & \mathbf{P}_{xz}\\
\mathbf{P}_{xz} & \mathbf{P}_{zz}%
\end{array}
\right)  =\left(
\begin{array}
[c]{cc}%
1.7518055 & -1.1389181\\
-1.1389181 & 3.4285107
\end{array}
\right)
\]
Taking into account the optimal initial anchor of inflation ($\pi_{0}=0.65$
for $z_{0}=1$), the welfare matrix is:%
\[
\left(
\begin{array}
[c]{cc}%
0 & -\mathbf{P}_{xx}^{-1}\mathbf{P}_{xz}\\
0 & 1
\end{array}
\right)  ^{T}\left(
\begin{array}
[c]{cc}%
\mathbf{P}_{xx} & \mathbf{P}_{xz}\\
\mathbf{P}_{xz} & \mathbf{P}_{zz}%
\end{array}
\right)  \left(
\begin{array}
[c]{cc}%
0 & -\mathbf{P}_{xx}^{-1}\mathbf{P}_{xz}=0.6504\\
0 & 1
\end{array}
\right)  =\left(
\begin{array}
[c]{cc}%
0 & 0\\
0 & \mathbf{P}_{zz}-\mathbf{P}_{xz}\mathbf{P}_{xx}^{-1}\mathbf{P}_{xz}%
\end{array}
\right)
\]
The welfare loss of Gali's (2015) impulse response functions with Ramsey
optimal initial condition: $\pi_{0}=0.65$ for $z_{0}=1$ is:%
\[
W=-\left(  \mathbf{P}_{zz}-\mathbf{P}_{xz}\mathbf{P}_{xx}^{-1}\mathbf{P}%
_{xz}\right)  z_{0}^{2}=-2.688\cdot z_{0}^{2}%
\]
We found the same value simulating impulse response functions over two hundred
periods, computing period loss function and a discounted sum of these period
loss functions over two hundred periods. Additional results on this example
can be found in Chatelain and Ralf (2019).

\emph{Using only the information available in Anderson et al (1996), e.g.
assuming }$\mathbf{P}_{zz}=0$\emph{ for the missing block matrix in the value
function, welfare loss would be strictly positive }$\mathbf{P}_{xz}%
\mathbf{P}_{xx}^{-1}\mathbf{P}_{xz}=0.74>-2.688$\emph{, which is impossible.}

This paper is part of a broader project which evaluates the bifurcations of
dynamic systems which occurs for Ramsey optimal policy versus discretion
equilibrium (Chatelain and Ralf (2021) or versus simple rules (Chatelain and
Ralf (2020c). In particular, an Hopf bifurcation occurs for the new-Keynesian
model (Chatelain and Ralf (2020a)). Super-inertial interest rate rules are not
solutions of Ramsey optimal monetary policy (Chatelain and Ralf (2020b).
Ramsey optimal policy eliminates multiple equilibria such as the fiscal theory
of the price level in the frictionless model (Chatelain and Ralf (2020d) or in
the new-Keynesian model (Chatelain and Ralf (2020e)).

\section{Appendix}

The numerical solution of the welfare matrix is obtained using Scilab code:

\texttt{beta1=0.99; eps=6; kappa=0.1275; rho=0.8;}

\texttt{Qpi=1; Qz=0 ; Qzpi=0; R=kappa/eps;}

\texttt{A1=[1/beta1 -1/beta1 ; 0 rho] ;}

\texttt{A=sqrt(beta1)*A1;}

\texttt{B1=[-kappa/beta1 ; 0];}

\texttt{B=sqrt(beta1)*B1;}

\texttt{Q=[Qpi Qzpi ;Qzpi Qz ];}

\texttt{Big=sysdiag(Q,R);}

\texttt{[w,wp]=fullrf(Big);}

\texttt{C1=wp(:,1:2);}

\texttt{D12=wp(:,3:\$);}

\texttt{M=syslin('d',A,B,C1,D12);}

[\texttt{Fy,Py]=lqr(M);}

\texttt{Py}

\texttt{Py(2,2)-Py(1,2)*inv(Py(1,1))*Py(1,2)}

\end{document}